\documentclass[smallextended]{svjour3}

\usepackage{amsmath,amssymb,bbm}
\usepackage{color,comment,xfrac}

\newcommand{\Tr}{\operatorname{Tr}}
\newcommand{\supp}{\operatorname{supp}}
\newcommand{\HS}{\mathrm{HS}}

\newcommand{\mydot}{{\,\cdot\,}}
\newcommand{\fH}{\mathcal H}
\newcommand{\fK}{\mathcal K}
\newcommand{\mhh}{(-1/2,1/2)}
\newcommand{\bbC}{\mathbb{C}}
\newcommand{\bbR}{\mathbb{R}}
\newcommand{\bbZ}{\mathbb{Z}}
\newcommand{\bbN}{\mathbb{N}}
\newcommand{\ths}{\text{HS}}
\newcommand{\htil}{\tilde{\mathcal H}}
\newcommand{\ltht}{L^2(\htil)}
\newcommand{\bone}{\ensuremath{\mathbbm 1}}
\newcommand{\gbg}{\Gamma\backslash G}

\newcommand{\myphi}{\phi}

\newcommand{\rhonormk}{\left\Vert \rho_{\omega_k}(f)\right\Vert _{\HS}^{2}}
\newcommand{\hn}{n} 
\newcommand{\mycdot}{\cdot} 
\newcommand{\bbH}{ H_{\hn}}
\newcommand{\myphikf}{|\omega_k|^{\hn} \rhonormk}
\newcommand{\mqqo}{(-1/4,1/4)}
\newcommand{\dsintparams}{{ \bbZ^{2\hn}\times2\bbZ}}

\begin{document}

\title{Operator-Valued Frames for the Heisenberg Group\thanks{This work was supported by the U.S.~Air Force Office of Scientific Research under Grant No.~FA9550-12-1-0418.}
}

\author{Benjamin Robinson \and William Moran
        \and Douglas Cochran \and Stephen D.~Howard}
\authorrunning{Robinson et al.} 

\institute{Benjamin Robinson and Douglas Cochran\at
              School of Mathematical \& Statistical Sciences \\
              Arizona State University \\
              Tempe, AZ 85287-1809 USA
              \email{Benjamin.Robinson@asu.edu, cochran@asu.edu}  
 \and
          William Moran \at
          RMIT University \\
          Melbourne, VIC  3000 Australia
          \email{Bill.Moran@rmit.edu.au}
 \and
          Stephen D.~Howard \at
          Defence Science \& Technology Organisation \\
          Edinburgh, SA  5111 Australia
          \email{stephen.howard@dsto.defence.gov.au}
}


\maketitle
\begin{abstract}
A classical result of Duffin and Schaeffer gives conditions under which a discrete collection of characters on $\bbR$, restricted to $E=\mhh$, forms a Hilbert-space frame for $L^2(E)$. For the case of characters with period one, this is just the Poisson Summation Formula. Duffin and Schaeffer show that perturbations preserve the frame condition in this case.  This paper gives analogous results for the real Heisenberg group $H_n$, where frames are replaced by \emph{operator-valued frames}. The Selberg Trace Formula is used to show that perturbations of the orthogonal case continue to behave as operator-valued frames.  This technique enables the construction of decompositions of elements of $L^2(E)$ for suitable subsets $E$ of $H_n$ in terms of representations of $H_n$. 

\vspace*{0.2cm}\noindent
{\bf Keywords} Operator-valued frames, G-frames, Representations, Heisenberg Group, Sampling

\vspace*{0.2cm}\noindent
{\bf AMS Subject Classification} 42C15
\end{abstract}

\section{Introduction}

A frame for a Hilbert space $\fH$ is a sequence $\{\xi_j\, |\, j\in\bbN\} \subset \fH$ such that
\[
A\left\Vert \xi\right\Vert _{\fH}^2\le\sum_{j\in\bbZ} \left|\left< \xi_j,\xi\right>_{\fH} \right|^2\le B\left\Vert \xi\right\Vert _{\fH}^2
\]
for positive real numbers $A$ and $B$ and all $\xi\in \fH$.  In the case that $\{\xi_j\}$ is an orthonormal basis for $\fH$, this sequence is a Parseval frame for $\fH$, meaning $A=B=1$.  In particular, when $\fH=L^2(-1/2,1/2)$, the sequence $\{\xi_j\}$ may be taken to be the standard Fourier basis $\{e^{2\pi i j \mydot}\, |\, j\in \bbZ\}$. In their seminal 1952 paper \cite{duffin1952class}, Duffin and Schaeffer proved that it is possible to perturb the numbers $j\in\bbZ$ and still preserve the frame condition:
\begin{theorem}
  \label{dsresult}
Denote by $E$ the interval $\mhh\subset\bbR$.  Let $M>0$ and $\delta>0$, and suppose $\{\chi_j=e^{2\pi i \omega_j\cdot}\, |\, j\in\bbZ\}$ is a sequence of characters of $\bbR$ with
\begin{enumerate}
\item $|\omega_{j}-j|<M$, for all $j\in \bbZ$, and
\item $|\omega_j-\omega_k|\geq \delta$ for all $j\neq k\in \bbZ$.
\end{enumerate}
Then there exist positive real numbers $B\geq A>0$ such that, for any $f\in L^2(E)$,
\begin{equation*}
A\left\Vert f\right\Vert _{L^2(E)}^2\le\sum_{j\in\bbZ} \left|\left< \chi_j,f\right>_{L^2(E)} \right|^2\le B\left\Vert f\right\Vert _{L^2(E)}^2 .
\end{equation*}
\end{theorem}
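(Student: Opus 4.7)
The plan is to translate the frame inequality into a sampling statement on the Paley-Wiener space of entire functions of exponential type $\pi$, and then apply classical tools from complex analysis. By the Paley-Wiener theorem, the Fourier transform maps $L^2(E)$ isometrically onto the space $PW_\pi$ of entire functions $F$ of exponential type $\pi$ whose restrictions to $\bbR$ are square-integrable. Under this identification, $\langle \chi_j, f\rangle_{L^2(E)}$ becomes (up to a conjugation) the point evaluation $F(\omega_j)$, where $F = \hat{f}$, so the frame inequality of Theorem~\ref{dsresult} is equivalent to the sampling inequality
\[
A \|F\|_{L^2(\bbR)}^2 \leq \sum_{j\in\bbZ} |F(\omega_j)|^2 \leq B \|F\|_{L^2(\bbR)}^2
\]
for every $F \in PW_\pi$.

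For the upper (Bessel) bound, the uniform separation condition $|\omega_j - \omega_k| \geq \delta$ alone suffices. Since $|F|^2$ is subharmonic, each value $|F(\omega_j)|^2$ is dominated by its mean over the complex disk of radius $\delta/2$ about $\omega_j$; the separation guarantees that these disks are pairwise disjoint, and the off-real contribution is controlled by the standard exponential-type growth estimate. Summing over $j$ produces a Plancherel-Pólya estimate with a constant $B$ depending only on $\delta$.

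For the lower bound, the perturbation condition $|\omega_j - j| < M$ exhibits $\{\omega_j\}$ as a bounded perturbation of $\bbZ$, where the exponentials $\{e^{2\pi i j \mydot}\}$ form a Parseval frame for $L^2(E)$ by classical Fourier series. When $M < 1/4$, Kadec's $\tfrac14$-theorem would immediately yield that $\{\chi_j\}$ is a Riesz basis, hence a frame. For general $M$, I would follow the original Duffin-Schaeffer strategy via complex analysis: construct an entire function $G$ of exponential type whose zero set is exactly $\{\omega_j\}$, as a Weierstrass-type canonical product normalised against the product for $\bbZ$ (namely $\sin \pi z$), and use $G$ to derive an interpolation formula reconstructing any $F \in PW_\pi$ from its samples $\{F(\omega_j)\}$. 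Growth estimates for $G$, obtained via Jensen's formula together with the separation condition, then translate into the uniform lower frame constant.

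The hard step is this lower bound for arbitrary $M$. Kadec's theorem demands $M < 1/4$, and Beurling density arguments require strictly supercritical density, whereas $\{\omega_j\}$ has density exactly one. The core difficulty is constructing the interpolating product $G$ so that its zero set is precisely $\{\omega_j\}$ while its growth is tight enough, uniformly in the perturbation parameter $M$, to extract a quantitative lower frame bound; this is the heart of the 1952 argument and is the step I expect to demand the most care.
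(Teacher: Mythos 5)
You should first be aware that the paper contains no proof of this statement: Theorem~\ref{dsresult} is quoted as a classical result and attributed to \cite{duffin1952class}, so there is no internal argument to compare against. On its own terms, the first half of your outline is sound: the Paley--Wiener reduction of the frame inequality to a sampling inequality for entire functions of exponential type $\pi$, and the Plancherel--P\'olya upper bound from the separation $\delta$ alone (subharmonicity of $|F|^2$ on disjoint disks of radius $\delta/2$), are correct and standard.

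The genuine gap is the lower bound, which you leave as a plan (``follow the 1952 construction''), and that plan cannot close for the statement as written. Duffin and Schaeffer's Theorem~I produces frames only on intervals strictly shorter than the critical one --- in the present normalization, on $L^2(-c,c)$ with $c<1/2$ --- i.e.\ it requires density strictly above the critical density, which is exactly the obstruction you flag (``density exactly one'') but do not resolve. At the critical interval $E=\mhh$ the conclusion in fact fails once $M>1/4$: take $\omega_0=0$ and $\omega_j=j+\beta\operatorname{sgn}(j)$ with $1/4<\beta<M$, a separated sequence with $|\omega_j-j|\le\beta<M$. Its canonical product $G(z)=z\,\Gamma(1+\beta)^{2}/\bigl(\Gamma(1+\beta-z)\,\Gamma(1+\beta+z)\bigr)$ is entire of exponential type $\pi$, vanishes exactly on $\{\omega_j\}$, and by the reflection formula and Stirling satisfies $|G(x)|=O\bigl((1+|x|)^{-2\beta}\bigr)$ on $\bbR$, hence $G\in L^2(\bbR)$ because $4\beta>1$. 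By Paley--Wiener, $G=\hat f$ for some nonzero $f\in L^2(E)$ with $|\langle\chi_j,f\rangle_{L^2(E)}|=|G(\omega_j)|=0$ for every $j$, so the lower frame bound fails (the system is not even complete). Consequently no growth estimate on an interpolating product can make your lower-bound step work for arbitrary $M$ at the critical interval; what your approach can deliver is either the case $M<1/4$ via Kadec, or the Duffin--Schaeffer statement in its correct, strictly oversampled form ($L^2(-c,c)$, $c<1/2$), where the 1952 product estimates do close. As it stands, the heart of the theorem is both unproven in your proposal and, for large $M$, unprovable in the form stated.
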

Modern literature in frame theory widely acknowledges that the subject was initiated by this paper of Duffin and Schaeffer, although it received relatively little attention until the late 1980s when Daubechies, Grossmann, and Meyer connected this important idea with the rapidly expanding area of wavelet analysis \cite{daubechies1986painless}.  Theorem \ref{dsresult} has inspired numerous generalizations and extensions, including the highly significant work of Kadets (Kadec) \cite{kadec1964exact} and Avdonin \cite{avdonin1974} on Riesz bases of exponentials. These and other results are described in \cite{lindner1999}. 

The objective of this paper is to extend these perturbation results to the real Heisenberg group $\bbH$ in a natural way. In this context, the appropriate notion of a frame is a ``g-frame'' or ``operator-valued frame.'' This concept is found in \cite{sun2006g} and also appears in \cite{casazza2004frames,casazza2008fusion,kaftal2009operator,Han:2014kx}. In what follows, the term \emph{operator-valued frame} (OVF) will be used to mean a  countable sequence of (arbitrary-rank) linear operators $T_j:\fH\rightarrow \fK_j$ mapping a separable, complex Hilbert space $\fH$ into separable, complex Hilbert spaces $\fK_{j}$ with the property that there are positive real numbers $B \ge A > 0$ such that
\[
A\left\Vert \xi\right\Vert^2\le\sum_j\left\Vert T_j \xi\right\Vert _{\fK_j}^2\le B\left\Vert \xi\right\Vert^2 
\]
for all $\xi\in \fH$. In some papers, the definition includes the additional assumption that the maps $T_j$ all have the same rank, but that condition will not be required. A frame in the usual sense is a special case of this definition with the additional requirement that $\dim \fK_j=1$ for all $j$. Much of the study of OVFs thus far has been motivated by multiwavelets \cite{kaftal2009operator} and by certain applications in distributed processing \cite{casazza2008fusion}. 

One straightforward way to construct OVFs \cite{kaftal2009operator,Han:2014kx} is to fix a locally compact group $G$, a unitary representation $\pi$ of $G$ on
a Hilbert space $\fH$, and a sequence of points $\{x_j\}\subset G$, and set $T_j=T\pi(x_j)$ where $T$ is some fixed operator from $\fH$ into a separable, complex Hilbert space $\fK$. This recipe is also used to generate many rank-one frames of interest \cite{daubechies1986painless}. However, as Theorem \ref{dsresult} suggests, there are other constructions for frames.  Among them are the  so-called \emph{frames of exponentials}, as described in Theorem~\ref{dsresult}. Others have extended Theorem~\ref{dsresult} to more general locally compact abelian groups (see \cite{feichtinger1992irregular}).  

The general goal of this paper is to extend the circle of results of Duffin and Schaeffer to the setting of a non-trivial subspace $\fH$ of $L^2(G)$ where $G$ is a unimodular locally compact Lie group. In the case of a noncommutative group, such as $H_n$, there will of course be representations of dimension greater than one. This means that frames will have to be replaced by OVFs. 

Some conditions are presented in Section~\ref{sec:Harmonic-g-frames-of-reps} for $\{\pi_j\}$ corresponding, in a fairly general context, to the orthogonality of harmonic frames of exponentials in the classical case studied by Duffin and Schaeffer. Section~\ref{sec:heis} sets forth results analogous to Theorem~\ref{dsresult} for the special case where $G$ is $\bbH$.

\section{Harmonic OVFs of representations}
\label{sec:Harmonic-g-frames-of-reps}

This section begins with a synopsis of a few aspects of representation theory needed in subsequent discussion. It proceeds to describe some Parseval OVFs. The setting is a unimodular locally compact Lie group $G$ with a discrete co-compact closed subgroup $\Gamma$. Let $\mu$ be a finite invariant measure on  right cosets $\Gamma\backslash G$ \cite[Theorem 2.49]{gb1995course} that is normalized so that $\mu(\Gamma\backslash G)=1$. Then the \emph{quasi-regular} representation $R$ of $(G,\Gamma)$ is defined on $L^2(\gbg):=L^2(\Gamma\backslash G,d\mu)$ by
\begin{equation*}
\left(R(y)\phi\right)(x)=\phi(xy) 
\end{equation*}
with $y\in G$, $x\in\Gamma\backslash G$, and $\phi\in L^2(\Gamma\backslash G)$. The symbol $C_c(G)$ ($C_c^\infty(G)$) will denote continuous (smooth) compactly supported functions from $G$ into $\bbC$ and, when $E$ is open in $G$, the symbol $C_E^\infty(G)$ will denote the subset of $C_c^\infty(G)$ with support contained in $E$.

The archetypal example of this setting is when $G=\bbR$ and $\Gamma=\bbZ$, in which case $\Gamma\backslash G$ can be identified with $[-1/2, 1/2)$. For these particular groups, $R$ decomposes as
\begin{equation*}
R=\bigoplus_{j\in\bbZ}\pi_j 
\end{equation*}
where $\pi_j$ is the character on $G=\bbR$ defined by $\pi_j(x)=e^{2\pi ijx}$. By the Poisson summation formula, for  $f\in C_c^\infty(G)$,
\[
\left\Vert \sum_{k\in\bbZ}f(\mydot+k)\right\Vert _{L^2(-1/2,1/2)}^2=\sum_{j\in\bbZ}\left|\left\langle \pi_j, f \right\rangle\right|^2 .
\]
Restricting to $f$ supported on $E=(-1/2,1/2)$ gives
\[
\left\Vert f\right\Vert _{L^2(E)}^2=\sum_{j\in\bbZ}\left|\left\langle \pi_j, f \right\rangle\right|^2
\]
for all $f\in C_E^\infty(G)$. This equality can be extended to all of $L^2(E)$ by density, showing that the harmonic exponentials $\{\pi_j\}$ form a Parseval frame for $L^2(E)$.  While this is just the Plancherel theorem for Fourier series, the derivation here serves to illustrate the general case.

Let $G$ be a locally compact group, let $dx=dm(x)$ be Haar measure on $G$, let $E\subset G$ be open with compact closure and $m(E)>0$, and let $\{\pi_j\,|\,j\in\bbN\}$ be a set of representations of $G$ on separable Hilbert spaces $\{\fH_j\, |\, j\in\bbN\}$.  Further, assume that $\{\pi_j\,|\,j\in\bbN\}$ has the property that, for each $j$ and all $f\in L^2(E)$, the operator defined by $\pi_j(f)=\int_G f(x)\pi_j(x)\, dx$  is a  Hilbert-Schmidt class operator on $\fH_j$. Then $\{\pi_j\}$ will be called an \emph{OVF of representations} for $L^2(E)$ provided there exist $B\ge A>0$ such that 
\begin{equation}
\label{eq:avertvert}
A\left\Vert f\right\Vert _{L^{2}(E)}^{2}\le\sum\left\Vert \pi_j(f)\right\Vert _{\ths}^2\le B\left\Vert f\right\Vert _{L^2(E)}^2
\end{equation}
for all $f\in L^2(E)$.  In this expression and subsequently, $\left\Vert \mydot\right\Vert_\ths$ denotes the Hilbert-Schmidt norm.  In the case that each $\fH_j=\bbC$, each $\pi_j(x)$ for $x\in G$ and $j\in \bbN$ can be viewed either as a scalar or as an operator on $\bbC$, and $\left\Vert\pi_j(f)\right\Vert_{\HS} = \left|\left\langle  \pi_j, f \right\rangle\right|$ for all $f\in L^2(E)$.  In particular, when each $\pi_j$ in the preceding paragraph is regarded as a representation on $\bbC$, the inequality \eqref{eq:avertvert} holds with $A=B=1$.

In the more general setting, the appropriate replacement for $E=\mhh$ as it occurs in the $(\bbR,\bbZ)$ case, will be called a $(G,\Gamma)$ \emph{reproducing set}; i.e., a non-empty open set $E$ with compact closure having the property that $EE^{-1}$ is disjoint from every conjugate of $\Gamma-\{\bone_G\}$. Existence of such an $E$ is equivalent to existence of a non-empty open set $U\subset G$ such that $\cup_{g\in G}\,g^{-1}Ug$ intersects $\Gamma$ only in the point $\bone_{G}$. Further, if $R$ is as above, then $R$ decomposes discretely as
\begin{equation}
\label{eq:r-decomp-with-mults}
R=\bigoplus\pi_j,\qquad(\text{listed with multiplicities})
\end{equation}
where each $\pi_j$ is represented on a Hilbert space $\fH_j$ and has finite multiplicity \cite[Lemma 9.2.7]{dietmar2009principles}. Now and henceforth it is assumed that $dx$ is chosen so that for $f \in C_c(G)$
\begin{equation*} 
\label{eq:Fubini-group}
\int_G f(x)\, dx = \int_{\gbg} \sum_\gamma f(\gamma x)\, d\mu(x)
\end{equation*}
for $\mu$ as described above.  This is possible by \cite[Theorem 2.49]{gb1995course}, and in this case, the above equality also holds for all $f\in L^1(G)$ \cite{bourbaki1963integration}. Further, $R(f)$ will denote the operator  on $\htil=L^2(\gbg)$ obtained by integrating the representation $R$ against a function $f\in L^1(G)$; i.e., 
\begin{equation}
\label{eq:integrated-R}
R(f)=\int_G f(y)R(y)\,dy .
\end{equation}

In what follows, the algebra of trace-class operators on a separable Hilbert space $\fK$ will be denoted by $L^1(\fK)$ and the Hilbert space of Hilbert-Schmidt class operators on $\fK$ will denoted by $L^2(\fK)$.  Further, the trace of an operator $T\in L^1(\fK)$ will be denoted $\Tr(T)$, the Hilbert-Schmidt inner product of $S$ and $T \in L^2(\fK)$ will be denoted $\left\langle S, T \right\rangle_{L^2(\fK)} = \Tr(T^*S)$.

With the necessary background and terminology established, the objective in the remainder of this section is to prove the following:
\begin{theorem}
\label{parseval}
Let $E$ be a $(G,\Gamma)$ reproducing set. Then the decomposition of $R$ into irreducible representations $\{\pi_j\}$, listed with multiplicities, implies that
$\{\pi_j\}$ forms a Parseval OVF for $L^2(E)$.
\end{theorem}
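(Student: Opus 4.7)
The plan is to establish the Parseval identity $\sum_j\|\pi_j(f)\|_{\HS}^2 = \|f\|_{L^2(E)}^2$ by computing $\|R(f)\|_{\HS}^2$ in two different ways for $f\in C_E^\infty(G)$, then extending by density. One computation uses the integral-operator representation of $R(f)$ on $\ltht$; the other exploits the direct-sum decomposition \eqref{eq:r-decomp-with-mults}.

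First I would write out $R(f)$, defined by \eqref{eq:integrated-R}, as an integral operator on $\ltht$. For $\phi\in\ltht$ one has
\[
(R(f)\phi)(x) = \int_G f(y)\phi(xy)\,dy = \int_G f(x^{-1}z)\phi(z)\,dz,
\]
and after folding $z$ back to a fundamental domain via the Weil formula, the kernel is
\[
K(x,z) = \sum_{\gamma\in\Gamma} f(x^{-1}\gamma z).
\]
Here is where the reproducing-set hypothesis enters: if $f$ is supported in $E$ and $x^{-1}\gamma_1 z,\, x^{-1}\gamma_2 z\in E$, then $x^{-1}\gamma_1\gamma_2^{-1}x\in EE^{-1}$, which by assumption forces $\gamma_1=\gamma_2$. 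Thus for each pair $(x,z)$ at most one term of the sum is nonzero, so $|K(x,z)|^2=\sum_\gamma|f(x^{-1}\gamma z)|^2$, and unfolding in the $x$-variable (using unimodularity to undo the left-translation by $z^{-1}$) gives
\[
\|R(f)\|_{\HS}^2 = \int_{\gbg}\!\!\int_G |f(x^{-1}z)|^2\,dz\,d\mu(x) = \mu(\gbg)\,\|f\|_{L^2(G)}^2 = \|f\|_{L^2(E)}^2.
\]

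On the other hand, since $R=\bigoplus_j\pi_j$ acts on an orthogonal direct sum $\htil=\bigoplus_j\fH_j$ of $R$-invariant subspaces, the operator $R(f)$ respects this decomposition, with $R(f)\!\restriction_{\fH_j}=\pi_j(f)$. Because the $\fH_j$ are mutually orthogonal, the Hilbert-Schmidt norms add (choosing orthonormal bases in each $\fH_j$ and concatenating yields an orthonormal basis of $\htil$):
\[
\|R(f)\|_{\HS}^2 = \sum_j \|\pi_j(f)\|_{\HS}^2.
\]
Combining the two computations gives the Parseval identity for $f\in C_E^\infty(G)$, and since $C_E^\infty(G)$ is dense in $L^2(E)$ and the map $f\mapsto\bigoplus_j \pi_j(f)$ is bounded by what we have just shown, the identity extends to all of $L^2(E)$.

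The main subtlety, and the step deserving the most care, is verifying that each individual $\pi_j(f)$ is Hilbert-Schmidt (so that the direct-sum decomposition of HS norms is meaningful) and that the decomposition of HS norms is genuinely legitimate when some irreducibles appear with multiplicity. The former follows automatically from $R(f)$ being HS once the direct sum is taken, since each summand inherits finite HS norm; the latter is handled by noting that multiplicities in \eqref{eq:r-decomp-with-mults} correspond to distinct orthogonal copies of the representation space, each contributing its own HS-norm term. Everything else is essentially the Weil integration formula plus the key combinatorial consequence of the reproducing-set property that the kernel sum collapses to a single nonzero term.
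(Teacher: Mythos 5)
Your argument is correct, and it reaches the crucial isometry $\left\Vert R(f)\right\Vert_{\HS}^{2}=\left\Vert f\right\Vert_{L^{2}(E)}^{2}$ by a genuinely different route than the paper. Both proofs share the same two ingredients: the kernel formula (\ref{eq:hs-norm-Rf-01}) for $R(f)$ acting on $L^{2}(\gbg)$, and the additivity of Hilbert--Schmidt norms across the decomposition (\ref{eq:r-decomp-with-mults}), which is (\ref{eq:vertrf}) in the paper (your remark about multiplicities corresponding to orthogonal copies is exactly what the intertwining unitary $V$ encodes). Where the paper then passes to the adjoint map $\check R^{*}$ (Lemma~\ref{thm:cod_hs}) and invokes the Selberg Trace Formula for the translates $f_x$ to prove the pointwise reproducing identity $(\check R^{*}R(f))(x)=f(x)$ on $E$ (Lemma~\ref{thm:parseval_lem}), you work directly with the kernel $K(x,z)=\sum_{\gamma}f(x^{-1}\gamma z)$ and observe that the reproducing-set hypothesis forces at most one nonzero summand for each $(x,z)$, because $x^{-1}\gamma_{1}z,\,x^{-1}\gamma_{2}z\in E$ implies $x^{-1}\gamma_{1}\gamma_{2}^{-1}x\in EE^{-1}$; the cross terms then vanish and the quotient integration (unfolding) formula, together with $\mu(\gbg)=1$, gives the isometry outright. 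Your route is more elementary: it needs only Hilbert--Schmidt, not trace-class, information about $R(f)$, avoids the trace formula entirely, and in fact the computation applies verbatim to every $f\in L^{2}(E)$ (Tonelli justifies the manipulations of the nonnegative double integral), so the closing density step---where you leave implicit the identification of the continuous extension with $f\mapsto\bigoplus_{j}\pi_{j}(f)$, a point the paper covers through the boundedness of $\check R$ in Lemma~\ref{thm:hs_lem}---could simply be dispensed with. What the paper's longer route buys is the explicit inversion formula $\check R^{*}R(f)=f$ on $E$, i.e., a reconstruction of $f$ from the analysis operator, which has independent interest beyond the bare Parseval equality that your argument delivers. (One cosmetic slip: your displayed unfolding is in the $z$-variable, via left invariance of Haar measure, not in $x$ as your parenthetical says; this does not affect the proof.)
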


This begins with a preliminary result:
\begin{lemma}
\label{thm:hs_lem}
Let $E\subset G$ be non-empty and open with compact closure, and let $\fH=L^2(E)$. Then $\check R: f\mapsto R(f)$ is a bounded linear map from $\fH$ into $L^2(\htil)$.
\end{lemma}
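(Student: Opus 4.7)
The plan is to realize $R(f)$ as an integral operator on $\htil$, write down an explicit kernel, and bound its $L^2$-norm by $\|f\|_{L^2(E)}$. First I would rewrite \eqref{eq:integrated-R} acting on $\phi\in\htil$: unimodularity of $G$ and the substitution $z=xy$ give
\[
(R(f)\phi)(x) = \int_G f(y)\,\phi(xy)\,dy = \int_G f(x^{-1}z)\,\phi(z)\,dz,
\]
and then the Weil integration formula (the displayed $\int_G f\,dx = \int_{\gbg}\sum_\gamma f(\gamma x)\,d\mu$ from the paragraph containing \eqref{eq:r-decomp-with-mults}), applied to $h(z)=f(x^{-1}z)\phi(z)$ together with the left $\Gamma$-invariance of $\phi$, folds this to
\[
(R(f)\phi)(x) = \int_{\gbg} K_f(x,z)\,\phi(z)\,d\mu(z), \qquad K_f(x,z) := \sum_{\gamma\in\Gamma} f(x^{-1}\gamma z).
\]
Because $f\in L^2(E)$ is supported in the compact set $\overline{E}$ and $\Gamma$ is discrete, the sum defining $K_f(x,z)$ has only finitely many nonzero terms pointwise, and a short change-of-variables shows $K_f$ is bi-$\Gamma$-invariant, so it descends to $\gbg\times\gbg$. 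By the standard theory of integral operators on $L^2$, $\|R(f)\|_{\HS}^{2} = \iint_{\gbg\times\gbg} |K_f(x,z)|^{2}\,d\mu(x)\,d\mu(z)$.

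For the estimate, Cauchy--Schwarz on the finite sum gives
\[
|K_f(x,z)|^{2} \le N(x,z)\,\sum_{\gamma\in\Gamma} |f(x^{-1}\gamma z)|^{2},
\]
where $N(x,z) = \#\{\gamma\in\Gamma : x^{-1}\gamma z\in \overline{E}\} = \#(\Gamma \cap x\overline{E}z^{-1})$. A direct computation shows $N(\gamma_1 x,\gamma_2 z)=N(x,z)$ for all $\gamma_1,\gamma_2\in\Gamma$, so using bi-$\Gamma$-invariance I may assume $x,z$ lie in a fixed compact set $K\subset G$ meeting every $\Gamma$-coset (such a $K$ exists because $\gbg$ is compact). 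Then $x\overline{E}z^{-1}\subset K\overline{E}K^{-1}$, a compact subset of $G$, whence
\[
N(x,z) \le N_{0} := \#\bigl(\Gamma\cap K\overline{E}K^{-1}\bigr) < \infty
\]
by discreteness of $\Gamma$.

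Finally I would unfold once more with the Weil formula: for each fixed $x$,
\[
\int_{\gbg} \sum_{\gamma\in\Gamma} |f(x^{-1}\gamma z)|^{2}\, d\mu(z) = \int_{G} |f(x^{-1}z)|^{2}\,dz = \|f\|_{L^{2}(E)}^{2}
\]
by unimodularity, so integrating over $x\in\gbg$ (using $\mu(\gbg)=1$) and combining with the previous step yields $\|R(f)\|_{\HS}^{2} \le N_{0}\,\|f\|_{L^{2}(E)}^{2}$, proving $\check R$ bounded. The main obstacle I anticipate is the uniform bound on $N(x,z)$; it leverages both the cocompactness of $\Gamma$ (to produce the compact transversal $K$) and its discreteness (to keep the lattice count finite on any compact set), and the compact-closure hypothesis on $E$ is essential at precisely this point.
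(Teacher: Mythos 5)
Your proof is correct and follows essentially the same route as the paper: both identify $\left\Vert R(f)\right\Vert_{\HS}^2$ with the $L^2(\gbg\times\gbg)$-norm of the folded kernel $\sum_{\gamma\in\Gamma}f(x^{-1}\gamma z)$ and then apply Cauchy--Schwarz together with the finiteness of $\Gamma\cap K\overline{E}K^{-1}$ for a compact set $K$ covering $\gbg$. The only differences are refinements of execution: you derive the kernel identity for $f\in L^2(E)$ directly by Weil unfolding (the paper cites Arthur's formula for $C_c(G)$ and asserts it carries over), and you finish by unfolding once more to obtain the cleaner constant $N_0=\#\left(\Gamma\cap K\overline{E}K^{-1}\right)$, whereas the paper pulls both integrals back to $K\times K$ and settles for the cruder bound $|\Gamma_0|^2\, m(K)\left\Vert f\right\Vert_{L^2(E)}^2$.
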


\begin{proof}
It is shown in \cite{arthur2005introduction} that, for $f\in C_c(G)$, the Hilbert-Schmidt norm of $R(f)$ is given by
\begin{equation}
\label{eq:hs-norm-Rf-01}
\left\Vert R(f)\right\Vert_{\HS}^2=\int_{\Gamma\backslash G}\int_{\Gamma\backslash G}\left| \sum_{\gamma\in\Gamma}f(x^{-1}\gamma y)\right|^2\, d\mu(x)\, d\mu(y) .
\end{equation}
Because $E$ has compact closure, this formula carries over \emph{mutatis mutandis}
to the case $f\in L^2(E)$. If $q:G\rightarrow\Gamma\backslash G$ is the canonical quotient map and $F$ is a measurable subset of $G$, then
\[
\int_G \chi_F(x)\,dx=\int_{\Gamma\backslash G} \sum_{\gamma\in \Gamma} \chi_F(\gamma x)\, d\mu( x) \ge \int_{\gbg} \chi_{q(F)}\, d\mu .
\]
By \cite[Lemma 2.46]{gb1995course}, there is a compact set $K\subset G$ such that $q(K)=F$.  Thus, given $S\subset \gbg$ and taking $F= q^{-1}(S)\cap K$ in the above yields $\int_K \chi_S \circ q(x)\, dx \ge \int_{\gbg} \chi_S\, d\mu$.  That is, $\int_K g\circ q(x)\, dx \ge \int_{\gbg} g\, d\mu$ for all characteristic functions $g$ on $\gbg$, and thus all non-negative measurable functions on $\gbg$. The right-hand side of (\ref{eq:hs-norm-Rf-01}) then becomes bounded
by
\[
\int_{\Gamma\backslash G}\int_{K}\left|\sum_{\gamma\in\Gamma}f(x^{-1}\gamma y)\right|^{2}\, dx\, d\mu(y)
\leq 
\int_K \int_K \left|\sum_{\gamma\in\Gamma}f(x^{-1}\gamma y)\right|^2\, dx\, dy .
\]
The sum in the integrand vanishes off the set $\Gamma_0=\Gamma\cap KEK^{-1}$, which is compact and discrete, hence finite. An application of the Cauchy-Schwarz inequality yields the following upper bound for  $\left\Vert R(f)\right\Vert _{L^2(\htil)}^2$,
\begin{align*}
\left\Vert R(f)\right\Vert _{\HS}^2&\leq  |\Gamma_0|\int_K \int_K \sum_{\gamma\in \Gamma_0}\left|f(x^{-1}\gamma y)\right|^2\, dx\, dy\\
 &\leq  |\Gamma_0|\sum_{\gamma\in \Gamma_0}\int_K \int_G \left|f(x^{-1}\gamma y)\right|^2\, dx\, dy\\
 & \le|\Gamma_0|^2 m(K)||f||_{L^2(E)}^2
\end{align*}
as desired.
\end{proof}

Now (\ref{eq:r-decomp-with-mults}) and (\ref{eq:integrated-R}) yield a unitary $V:\bigoplus \fH_{j}\rightarrow\htil$ for which, as an operator on $\bigoplus \fH_j$,
\[
V^{*}R(f)V=\bigoplus\pi_j(f).
\]
It follows that each $\pi_j(f)$ is a Hilbert-Schmidt class operator on $\fH_j$ and that
\begin{equation}
\label{eq:vertrf}
\left\Vert R(f)\right\Vert _{\HS}^2=\sum\left\Vert \pi_j(f)\right\Vert _{\HS}^2 . 
\end{equation}

The condition for the operators $\{\pi_j\}$, which respectively map into the Hilbert spaces $L^2(\fH_j)$, to form a Parseval OVF for $L^2(E)$ is 
\begin{equation}
\label{eq:parseval-g-frame-of-reps-gen}
\left\Vert f\right\Vert_{L^2(E)}^2=\sum\left\Vert \pi_j(f)\right\Vert_{\HS}^2 .
\end{equation}
In view of (\ref{eq:vertrf}), this inequality follows from $\left\Vert f\right\Vert_{L^2(E)}^2=\left\Vert R(f)\right\Vert _{\HS}^2$, a sufficient condition for which is that $E$ is a $(G,\Gamma)$ reproducing set. Verification of this sufficiency is achieved in the following lemmas.

\begin{lemma}
\label{thm:cod_hs}
Let $M\in L^1(\htil)$ and $E$ be an open subset of $G$ with compact closure and positive Haar measure. Then the function $f^M:E\rightarrow \bbC$ defined by $f^M(x) = \Tr(R(x^{-1})M)$ is bounded,
 and $\check R^{*}M=f^M$.
\end{lemma}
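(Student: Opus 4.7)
The plan is to handle boundedness and adjointness separately, with the adjoint computation being the heart of the matter.

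First, for boundedness of $f^M$, I would observe that $R(y^{-1})$ is unitary on $\htil$ for every $y \in G$, so $R(y^{-1})M \in L^1(\htil)$ with $\|R(y^{-1})M\|_1 \le \|R(y^{-1})\|_{\mathrm{op}}\|M\|_1 = \|M\|_1$. Since $|\Tr(T)| \le \|T\|_1$ for trace-class $T$, this gives $|f^M(x)| \le \|M\|_1$ for all $x \in G$. (The paper flags measurability as a parenthetical worry; one gets it for free by writing $M$ in its Schmidt decomposition $M = \sum_n \lambda_n \langle\mydot, v_n\rangle u_n$ with $\sum_n \lambda_n < \infty$, so that $f^M(x) = \sum_n \lambda_n \langle u_n, R(x^{-1}) v_n\rangle$, a uniformly convergent sum of continuous functions, using strong continuity of the quasi-regular representation.)

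Next, I would identify $\check R^* M$. Fix $f \in L^2(E)$ and $M \in L^1(\htil) \subset L^2(\htil)$. Using the inner-product convention $\langle S,T\rangle_{L^2(\htil)} = \Tr(T^*S)$ and the defining formula $R(f) = \int_G f(y)R(y)\,dy$, I would write
\[
\langle \check R(f), M\rangle_{L^2(\htil)} = \Tr\!\left(M^* \int_G f(y) R(y)\,dy\right) = \int_G f(y)\,\Tr(M^* R(y))\,dy,
\]
where the second equality interchanges the trace with the (weak-operator) integral. Since $f$ is supported on $E$ and the integrand is bounded in absolute value by $|f(y)|\,\|M\|_1$, this is a legitimate Fubini-type interchange (justifiable by applying $\Tr(M^*\mydot)$, a bounded linear functional on $L^1(\htil)$, to the Bochner integral defining $R(f)$).

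Then I would use unitarity of $R(y)$ together with the identity $\Tr(A^*) = \overline{\Tr(A)}$ to rewrite
\[
\Tr(M^* R(y)) = \overline{\Tr(R(y)^* M)} = \overline{\Tr(R(y^{-1})M)} = \overline{f^M(y)}.
\]
Substituting back yields
\[
\langle \check R(f), M\rangle_{L^2(\htil)} = \int_E f(y)\overline{f^M(y)}\,dy = \langle f, f^M\rangle_{L^2(E)},
\]
so $\check R^* M = f^M$ (as an element of $L^2(E)$, well-defined because $f^M$ is bounded and $E$ has finite Haar measure).

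The only step demanding any real care is the trace–integral interchange; I expect this to be the main obstacle. The cleanest justification is to recall that $M \mapsto \Tr(M^* T)$ is a bounded linear functional on $L^1(\htil)$ for each fixed bounded $T$, and then apply this functional term-by-term to the Riemann/Bochner sums approximating $R(f) = \int_G f(y)R(y)\,dy$, passing to the limit using the bound $|\Tr(M^* R(y))| \le \|M\|_1$ and $f \in L^1(E)$ (which holds because $E$ has compact closure).
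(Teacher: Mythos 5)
Your proposal is correct and follows essentially the same route as the paper: bound $f^M$ uniformly, then verify $\langle \check R(f),M\rangle_{L^2(\htil)}=\langle f,f^M\rangle_{L^2(E)}$ by pulling the trace inside the integral (the paper justifies this interchange by expanding the trace in an eigenbasis of $M$ and invoking dominated convergence, which is the same absolute-convergence argument you package as applying the bounded functional $\Tr(M^*\,\mydot\,)$ to the integral). Your trace-norm bound $|f^M(x)|\le\Vert M\Vert_1$ is in fact a slightly cleaner version of the paper's eigenbasis estimate, and the only blemish is the inconsequential index slip in your parenthetical, where the Schmidt expansion should give $\sum_n\lambda_n\langle R(x^{-1})u_n,v_n\rangle$ rather than $\sum_n\lambda_n\langle u_n,R(x^{-1})v_n\rangle$.
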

\begin{proof}
First it will be shown that $f^M$ is well-defined. If $M$ has eigenvalues
$\{\lambda_j\, |\, j\in\bbN\}$ and corresponding eigenbasis $\{e_j\}\subset\htil$,
and $U$ is any unitary operator on $\htil$, then 
\begin{align*}
\left|\Tr(UM)\right| & \le\sum_j\left|\left\langle UMe_j,e_j\right\rangle_{\htil}\right| \\
& \le\sum_j\left\Vert UMe_j\right\Vert_{\htil} \\
& =\sum_j\left\Vert Me_j\right\Vert_{\htil}=\sum_j |\lambda_j| .
\end{align*}
Thus, $f^M(x)=\sum_j \left\langle R\left(x^{-1}\right)Me_j, e_j \right\rangle$ converges absolutely to a bounded
function on $E$.

It will now be shown that 
\[
\left<R(f),M\right>_{\ltht}=\left<f,f^{M}\right>_{L^{2}(E)} .
\]
The right-hand side is equal to
\[
\int_{E}f(x)\Tr(M^{*}R(x))\, dx .
\]
As implied by the above estimates, the series $\Tr(M^{*}R(x))=\overline{\Tr(R(x^{-1})M)}$, expanded using $\{e_j\}$, converges absolutely to a bounded function. This means the integrand is dominated by a multiple of $|f(x)|$ and, since $f\in L^2(E)\subset L^1(E)$, it follows from the dominated convergence theorem that
\begin{align*}
\left<f,f^M\right>_{L^2(E)} & =\Tr\left(\int_E f(x)M^* R(x)\, dx\right)
\end{align*}
which is just 
\[
\Tr\left(M^* \int_E f(x)R(x)\, dx\right) .
\]
The latter is equal to $\left\langle R(f),M\right\rangle _{\ltht}$, as desired.
\end{proof}

\begin{lemma}
\label{thm:parseval_lem}
Let $E$ be a $(G,\Gamma)$ reproducing set and $f\in L^2(E)$. Then \eqref{eq:parseval-g-frame-of-reps-gen} holds. 
\end{lemma}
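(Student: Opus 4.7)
The strategy is to show $\|f\|_{L^2(E)}^2 = \|R(f)\|_{\HS}^2$, at which point \eqref{eq:parseval-g-frame-of-reps-gen} follows from \eqref{eq:vertrf}. The starting point is the explicit Arthur-type formula \eqref{eq:hs-norm-Rf-01}, which was noted to extend to $f\in L^2(E)$. The task is therefore to evaluate
\[
\int_{\Gamma\backslash G}\int_{\Gamma\backslash G}\left|\sum_{\gamma\in\Gamma} f(x^{-1}\gamma y)\right|^2 d\mu(x)\, d\mu(y)
\]
when $f$ is supported in a $(G,\Gamma)$ reproducing set $E$.

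The central observation, and the place where the reproducing property is used, is that for each fixed pair $(x,y)\in G\times G$, at most one term of the inner sum is nonzero. Indeed, suppose $\gamma_1,\gamma_2\in\Gamma$ both give $x^{-1}\gamma_i y\in E$. Then
\[
(x^{-1}\gamma_1 y)(x^{-1}\gamma_2 y)^{-1} = x^{-1}(\gamma_1\gamma_2^{-1})x \in EE^{-1},
\]
and this element lies in the conjugate $x^{-1}\Gamma x$. Since $EE^{-1}$ is disjoint from every conjugate of $\Gamma-\{\bone_G\}$, we must have $\gamma_1\gamma_2^{-1}=\bone_G$. Consequently $|\sum_{\gamma}f(x^{-1}\gamma y)|^2 = \sum_\gamma |f(x^{-1}\gamma y)|^2$ pointwise in $(x,y)$.

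With that simplification, the remaining computation is routine unfolding. Fixing a representative of $y$ and setting $H(x)=|f(x^{-1}y)|^2$, the normalization of $dx$ gives
\[
\int_{\Gamma\backslash G}\sum_{\gamma\in\Gamma}|f(x^{-1}\gamma y)|^2\, d\mu(x) \;=\; \int_G |f(x^{-1}y)|^2\, dx,
\]
after reindexing $\gamma\mapsto\gamma^{-1}$ in the sum. Using unimodularity (invariance under $x\mapsto x^{-1}$ and right translation by $y^{-1}$), this integral equals $\int_G |f(x)|^2\,dx = \|f\|_{L^2(E)}^2$. Integrating the constant $\|f\|_{L^2(E)}^2$ against $d\mu(y)$ over $\Gamma\backslash G$ and using $\mu(\Gamma\backslash G)=1$ yields $\|R(f)\|_{\HS}^2 = \|f\|_{L^2(E)}^2$, as required.

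The only delicate point is justifying Fubini and the unfolding identity for the nonnegative, possibly unbounded, integrand $|f|^2$ with $f\in L^2(E)$; this follows from Tonelli since everything in sight is nonnegative, and the argument in Lemma \ref{thm:hs_lem} has already shown that the outer double integral is finite. Everything else reduces to the reproducing-set disjointness argument and the normalization convention on $dx$.
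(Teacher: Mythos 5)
Your argument is correct, and it reaches the conclusion by a genuinely different route than the paper. The paper's proof works on the dense subspace $C_E^\infty(G)$, uses the trace-class property of $R(f)$ together with Lemma~\ref{thm:cod_hs} to compute $\check R^* R(f)$, applies the Selberg Trace Formula to the translates $f_x$, and then uses the reproducing property to kill the non-identity conjugacy terms, obtaining the pointwise identity $(\check R^* R(f))(x)=f(x)$ on $E$ before extending by density via Lemma~\ref{thm:hs_lem}. You instead work directly with the kernel formula \eqref{eq:hs-norm-Rf-01}: your key observation that $x^{-1}\gamma_1 y,\,x^{-1}\gamma_2 y\in E$ forces $x^{-1}\gamma_1\gamma_2^{-1}x\in EE^{-1}\cap x^{-1}\Gamma x=\{\bone_G\}$ correctly shows the inner sum has at most one nonzero term, after which Tonelli and the Weil unfolding identity (valid for the nonnegative periodized integrand, with the paper's normalization of $dx$ and $\mu(\Gamma\backslash G)=1$) give $\left\Vert R(f)\right\Vert_{\HS}^2=\left\Vert f\right\Vert_{L^2(E)}^2$ outright, and \eqref{eq:vertrf} then yields \eqref{eq:parseval-g-frame-of-reps-gen}. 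What your route buys is economy: no trace-class considerations, no trace formula, and no separate density step (everything is nonnegative, so convergence is automatic). What it costs is reliance on the assertion, made but not proved in detail in the proof of Lemma~\ref{thm:hs_lem}, that \eqref{eq:hs-norm-Rf-01} extends from $C_c(G)$ to $f\in L^2(E)$; if you prefer not to lean on that, you can run your computation for $f\in C_E^\infty(G)$ and then extend by density exactly as the paper does, using the boundedness of $\check R$ from Lemma~\ref{thm:hs_lem}. The paper's longer route also yields the stronger pointwise inversion statement $\check R^* R(f)=f$ on $E$, which illuminates the ``reproducing set'' terminology, but that extra structure is not needed for the lemma as stated.
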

 
\begin{proof}
Suppose $f\in C_E^\infty(G)$.  By \cite{arthur2005introduction}, $R(f)$ is trace-class.  Thus, with the notation $f_x(y)=f(yx)$, Lemma~\ref{thm:cod_hs} implies that the function $\check R^* R(f)$ has the following very specific form:
\begin{align}
(\check R^* R(f))(x) & = \Tr\left(R\left(x^{-1}\right)R(f)\right)\nonumber \\
& =\Tr\left(R\left(x^{-1}\right) \int_G f(y)R(y)\, dy \right)\nonumber \\
& = \Tr\left(\int_G f(y)R(y)\, dy R\left(x^{-1}\right)\right)\nonumber \\
  & =\Tr\left(\int_G f_x (y)R(y)\, dy\right)\nonumber \\
 & =\Tr\left(R(f_x)\right)\nonumber \\
 & =\int_{\Gamma\backslash G}\sum_{\gamma\in\Gamma}f_x(y^{-1}\gamma y)\, d\mu(y) \label{eq:geometric-trace-01-from-arthur}\\
  & =f_x(\bone_G)\mu(\Gamma\backslash G)+\int_{\Gamma\backslash G}\sum_{\bone_G\ne\gamma\in\Gamma}f_x(y^{-1}\gamma y)\, d\mu(y) \nonumber
\end{align}
where (\ref{eq:geometric-trace-01-from-arthur}) follows from the Selberg Trace Formula applied to the function $f_x$ (see \cite{arthur2005introduction}). If $x$ is such that $\supp f_x$ is disjoint from all conjugates of $\Gamma-\{\bone_G\}$, then the integral term vanishes and the right-hand side becomes $f_x\left(\bone_G\right)$, which is just $f(x)$. But this will happen if $x\in E$, since $\supp f_x=Ex^{-1}\subset EE^{-1}$, which has the desired disjointness property.

Hence, for $x\in E$ and $f\in C_E^{\infty}(G)$, $(\check R^* R(f))(x)=f(x)$. Consequently, $\left\Vert R(f)\right\Vert _{\HS}^2  =\left\langle \check R^* R(f), f \right\rangle_{\fH}  = \left\Vert f\right\Vert _{\fH}^2$
for all $f$ in a dense subspace of $\fH=L^2(E)$, and hence for all of $\fH$. As noted above, the desired Parseval frame condition (\ref{eq:parseval-g-frame-of-reps-gen}) follows from this equality. 
\end{proof}

This section has established that, if $G$ is a locally compact, unimodular Lie group, $\Gamma$ is a discrete, co-compact, closed subgroup, $\{\pi_j\, |\, j\in\bbN\}$ is a list (with multiplicities) of the subrepresentations of the quasi-regular representation of $(G,\Gamma)$, and $E$ is a $(G,\Gamma)$ reproducing set, then $\{\pi_j\}$ is an OVF of representations for $L^2(E)$ with $A=B=1$. Such entities will be called \emph{harmonic} OVFs.

\section{OVFs of representations for the real Heisenberg group}
\label{sec:heis}

With the context established in Section \ref{sec:Harmonic-g-frames-of-reps}, this section returns to the matter of finding a generalization of the Duffin-Schaeffer theorem on non-harmonic Fourier series in which the pair $(\bbR,\bbZ)$ is replaced by a more general $(G,\Gamma)$. In this setting, when one is given a reproducing neighborhood $E$ and a harmonic OVF of representations $\{\pi_j\, |\, j\in\bbN\}$ for $L^2(E)$, one may ask whether there is a ``neighborhood'' of $\{\pi_j\}$ consisting only of OVFs of representations for $L^2(E)$; i.e., consisting only of sequences of representations $\{\tilde{\pi}_j\, |\, j\in\bbN\}$ for which there are $B\ge A >0$ such that
\[
A\left\Vert f\right\Vert _{L^2(E)}^2\le\sum\left\Vert \tilde{\pi}_j(f)\right\Vert _{\ths}^2\le B\left\Vert f\right\Vert_{L^2(E)}^2
\]
for all $f\in L^2(E)$. This section takes up this question for $\bbH$, the real
Heisenberg group, defined as ordered triples $(x,\xi,t)\in\bbR^{\hn}\times\bbR^{\hn}\times\bbR$ with the operation
\[
(x,\xi,t)(x',\xi',t')=\left(x+x',\xi+\xi',t+t'+\frac{1}{2}(x\mycdot \xi'- x' \mycdot \xi)\right).
\]
The discrete subgroup $\Gamma$ consists of ordered triples in $\bbZ^{\hn}\times\bbZ^{\hn}\times\frac{1}{2}\bbZ$ and the reproducing neighborhood $E$ will be $D \times \mqqo$, where $D=\mhh^n \times \mhh^n$.

It is necessary to verify that $E$ really is a $(\bbH,\Gamma)$ reproducing
set. To see this, first observe that $\Gamma-\{0\}^{2n+1}= \Gamma_1 \cup \Gamma_2$ with $\Gamma_1 = \left(\bbZ^{2n}-\{0\}^{2n}\right)  \times \frac{1}{2}\bbZ$ and $\Gamma_2 = \{0\}^{2n} \times \left(\frac{1}{2}\bbZ - \{0\}\right)$.  Since the first $2n$ scalar components of $EE^{-1}$ lie in $(-1,1)$ and since the orbit of $\Gamma_1$ under conjugation in $G$ consists only of members of $(\bbZ^{2n}-\{0\}^{2n})\times\bbR$, $EE^{-1}$ is disjoint from this orbit.
%
On the other hand, $\Gamma_2$ 
is in the center of $\bbH$, so it is equal to its orbit under conjugation.  If $(x,\xi,t)\in \bbH$, then $(x, \xi, t)^{-1}=(-x,-\xi,-t)$, so if $(x,\xi,t),(x',\xi',t')\in E$ and if $(x,\xi,t)(x', \xi', t')^{-1}\in \Gamma_2$, then $x=x'$, $\xi=\xi'$, and $t-t'\in \frac{1}{2}\bbZ-\{0\}$, which is impossible since $t,t'\in\mqqo$.  Thus,  $EE^{-1}$ does not intersect $\Gamma_2$.

It remains to explicitly describe the subrepresentations of $R$ and their corresponding multiplicities.  Up to equivalence, the representations of $\bbH$ are of two types. The infinite-dimensional representations of $\bbH$ have the form (see \cite{thangavelu2009harmonic}) $\rho_\omega:\bbH\times L^2(\bbR^{\hn})\rightarrow L^2(\bbR^{\hn})$ 
\begin{equation*}
\left(\rho_{\omega}(x,\xi,t)\phi\right)(y)=e^{-2\pi i\omega(t+x\mycdot y + \frac{1}{2}x\mycdot \xi)}\phi(y+x)
\end{equation*}
with $\omega \in\bbR^*=\bbR-\{0\}$ and $\phi\in L^2(\bbR^{\hn})$.
The others are (one-dimensional) characters, given by $\chi_{b,\beta}(x,\xi,t)=e^{-2\pi i(b\mycdot x+\beta\mycdot\xi)}$ for $b,\beta \in \bbR^{\hn}$. 
To decompose $L^2(\Gamma\backslash H_n)$ into $R$-invariant subspaces, observe first that $g\in L^2(\Gamma\backslash H_n)$ may be viewed as a function on $H_n$ that is invariant under left translations in $\Gamma$.  Such a function satisfies, in particular, $g(x,\xi,t)=g(x,\xi,t+1/2)$.  Thus, $L^2(\Gamma\backslash H_n) = \bigoplus_{k\in\bbZ} \fK_{2k}$, where $\fK_{2k}$ is the $R$-invariant space $\{h\in L^2(\Gamma\backslash H_n): h(x,\xi,t) = e^{4\pi i k t} h(x,\xi,0)\}$.  The action of $R$ on $\fK_0$ factors through the action of the right regular representation of $\bbR^{2n}$ on $L^2(\mathbb{T}^{2n})$, and can therefore be shown to decompose into the sum
\[
\bigoplus_{a,\alpha\in\bbZ^n} \chi_{a,\alpha} .
\]
Further, it is shown in \cite{thangavelu2009harmonic} that the action of $R$ on $\fK_{2k}$, $k\ne 0$, splits into $|2k|^n$ irreducible actions, each of which is equivalent by a Weil-Brezin-Zak transform to the action of $\rho_{2k}$ on $L^2(\bbR^n)$.  Thus,
\[
R\cong\bigoplus_{a,\alpha\in\bbZ^{\hn}}\chi_{a,\alpha}\oplus\bigoplus_{k\in\bbZ^{*}}{ |2k|}^{\hn}{ \rho_{2k}}
\]
where $\bbZ^* = \bbZ-\{0\}$.  From this it follows that the frame condition (\ref{eq:parseval-g-frame-of-reps-gen}) becomes
\begin{equation*} 
\left\Vert f\right\Vert _{L^2(E)}^2=\sum_{a,\alpha\in\bbZ^{\hn}}\left|\chi_{a,\alpha}(f)\right|^2+\sum_{k\ne 0}{ |2k|}^{\hn}\left\Vert { \rho_{2k}}(f)\right\Vert_\HS^2
\end{equation*}
for all $f\in L^2(E)$. The goal of this paper can now be rephrased as the following result about perturbing the values of the equispaced parameters $a$, $\alpha$, and ${ 2k}$ to vectors $\{b_a\, |\, a\in\bbZ^{\hn}\}\subset \bbR^{\hn}$ and $\{\beta_\alpha\, |\,\alpha\in\bbZ^{\hn}\}\subset \bbR^{\hn}$ and real numbers  $\{\omega_k\, |\, k\in\bbZ^*\}$.

\begin{theorem}
\label{thm:non-harmonic-heisenberg}
Suppose $\{b_a\, |\, a\in\bbZ^{\hn}\}$ and  $\{\beta_\alpha\, |\, \alpha\in\bbZ^{\hn}\}$ are sequences of real $n$-vectors and $\{\omega_k \,|\,k\in \bbZ^*\}$ is a sequence of  real numbers.  Define 
\[
M = \max\left\{\sup_{a\in\bbZ^{\hn}} \left\Vert b_a - a\right\Vert_{\infty} , \sup_{\alpha\in\bbZ^{\hn}}\left\Vert \beta_\alpha - \alpha\right\Vert_{\infty}, \sup_{k\ne 0} \left| \omega_k-{ 2k} \right|  \right\} .
\]
If $M>0$ is sufficiently small, then there exist $A=A(M)>0$ and $B=B(M)$ such that
\begin{equation*}
A\left\Vert f\right\Vert _{L^2(E)}^2
\le \sum_{a,\alpha}\left|\chi_{b_a,\beta_\alpha}(f)\right|^2+\sum_{k\ne 0}{ |2k|}^{\hn}\left\Vert \rho_{\omega_k}(f)\right\Vert_\HS^2
\le B\left\Vert f\right\Vert _{L^2(E)}^2
\end{equation*}
holds for all $f\in L^2(E)$.
\end{theorem}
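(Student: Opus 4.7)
The plan is to combine two perturbation arguments via Fubini: one in the central variable $t$ (controlling $\rho_{\omega_k}$) and one in the transverse variables $(x,\xi)$ (controlling $\chi_{b_a,\beta_\alpha}$). Set
\[
F_0(x,\xi) := \int_{\mqqo} f(x,\xi,t)\,dt, \qquad \hat f_3(x,\xi,\omega):=\int_{\mqqo} f(x,\xi,t)\,e^{-2\pi i\omega t}\,dt,
\]
so that $\chi_{b_a,\beta_\alpha}(f)=\hat F_0(b_a,\beta_\alpha)$ (up to a Haar-measure constant). The $M=0$ case is exactly Theorem~\ref{parseval}.

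The first ingredient is an explicit formula for $\|\rho_\omega(f)\|_\HS^2$. Writing $\rho_\omega(f)$ as an integral operator on $L^2(\bbR^{\hn})$, substituting $u=y+x$ in the argument of $\phi$ to expose the kernel, and performing a Plancherel-style change of variables in $\xi$ produces
\[
\|\rho_\omega(f)\|_\HS^2 \;=\; \frac{c}{|\omega|^{\hn}}\int_D |\hat f_3(x,\xi,\omega)|^2\,dx\,d\xi
\]
for an absolute constant $c>0$, so that $|2k|^{\hn}\|\rho_{\omega_k}(f)\|_\HS^2 = c(|2k|/|\omega_k|)^{\hn}\|\hat f_3(\cdot,\cdot,\omega_k)\|_{L^2(D)}^2$. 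Because $|\omega_k|\ge|2k|-M\ge 2-M$ for every $k\ne 0$ whenever $M<1$, the weight $(|2k|/|\omega_k|)^{\hn}$ is uniformly $1+O(M)$ across all nonzero $k$.

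Next, fix $(x,\xi)\in D$ and put $g_{x,\xi}(t):=f(x,\xi,t)\in L^2(\mqqo)$, so that $\hat g_{x,\xi}(\omega)=\hat f_3(x,\xi,\omega)$. Under the rescaling $s=2t$, the set $\{\omega_k/2:k\ne0\}\cup\{0\}$ perturbs $\bbZ$ by at most $M/2$ (and is automatically separated since the unperturbed points are integers), so Theorem~\ref{dsresult} applied to $\tilde g(s)=g_{x,\xi}(s/2)\in L^2(\mhh)$ supplies constants $A_T(M),B_T(M)$, with $A_T(0)=B_T(0)$ equal to the Parseval value of the rescaled system, satisfying
\[
A_T(M)\|g_{x,\xi}\|_{L^2(\mqqo)}^2 \;\le\; |F_0(x,\xi)|^2 + \sum_{k\ne 0}|\hat f_3(x,\xi,\omega_k)|^2 \;\le\; B_T(M)\|g_{x,\xi}\|_{L^2(\mqqo)}^2.
\]
Integrating over $D$ and combining with the previous paragraph gives two-sided bounds on $\sum_{k\ne 0}|2k|^{\hn}\|\rho_{\omega_k}(f)\|_\HS^2$ in terms of $\|f\|_{L^2(E)}^2$ and $\|F_0\|_{L^2(D)}^2$. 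For the character sum, the hypothesis $\|(b_a,\beta_\alpha)-(a,\alpha)\|_\infty<M$ exhibits $\{(b_a,\beta_\alpha)\}$ as a coordinate-wise perturbation of $\bbZ^{2\hn}$; a multidimensional form of the Duffin--Schaeffer/Kadets theorem (obtainable by iterating Theorem~\ref{dsresult} in each of the $2\hn$ coordinates together with Fubini on $L^2(D)$) then yields $A_C(M),B_C(M)\to 1$ as $M\to 0$ with $A_C(M)\|F_0\|_{L^2(D)}^2 \le \sum_{a,\alpha}|\chi_{b_a,\beta_\alpha}(f)|^2 \le B_C(M)\|F_0\|_{L^2(D)}^2$.

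Adding the character and representation estimates and using the crude bound $\|F_0\|_{L^2(D)}^2\le m(\mqqo)\|f\|_{L^2(E)}^2$ to absorb the $\|F_0\|^2$ contributions carrying coefficients $A_C(M)-1$ or $1-A_T(M)/A_T(0)$ of size $O(M)$ (and possibly negative), one verifies that both the lower and upper bounds on the total sum converge to $\|f\|_{L^2(E)}^2$ as $M\to 0$; for $M$ sufficiently small the lower constant is strictly positive, producing the required $A>0$, and the upper constant produces $B$. The main technical obstacle is the bookkeeping between the two perturbation steps: the $k=0$ Fourier mode in the $t$-direction, which is exactly $\|F_0\|^2$ and accounts for the character contribution in the unperturbed Parseval identity, must be matched on the perturbed side through $\|F_0\|^2$ appearing in both the Step-2 rearrangement and the character frame bound; simultaneously one must maintain uniform control of the weight $(|2k|/|\omega_k|)^{\hn}$ down to $|k|=1$, which imposes a quantitative smallness threshold on $M$ (certainly $M<1$, with the sharper constant dictated by the multidimensional Kadets threshold on the character side).
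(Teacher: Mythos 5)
Your overall architecture (the Hilbert--Schmidt formula $\|\rho_\omega(f)\|_{\HS}^2=|\omega|^{-\hn}\|\mathcal F_3f(\cdot,\cdot,\omega)\|_{L^2(D)}^2$, the uniform $1+O(M)$ control of the weights $(|2k|/|\omega_k|)^{\hn}$, and a perturbed-exponential estimate) matches the paper's in spirit, but there is a genuine gap in your treatment of the character block. You claim that the two-sided bound $A_C(M)\|F_0\|_{L^2(D)}^2\le\sum_{a,\alpha}|\hat F_0(b_a,\beta_\alpha)|^2\le B_C(M)\|F_0\|_{L^2(D)}^2$ is ``obtainable by iterating Theorem~\ref{dsresult} in each of the $2\hn$ coordinates together with Fubini on $L^2(D)$.'' That reduction only works when the perturbation is coordinate-wise, i.e.\ when the $j$-th coordinate of the perturbed frequency depends only on the $j$-th entry of the multi-index. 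Here $b_a\in\bbR^{\hn}$ is an arbitrary perturbation of $a\in\bbZ^{\hn}$ (each component of $b_a$ may depend on all of $a$), and likewise for $\beta_\alpha$; so for $\hn\ge 2$, after the first one-dimensional application the ``inner'' function $x_1\mapsto\int F(x_1,x')e^{-2\pi i\,c'\cdot x'}dx'$ changes with the outer summation index, and the one-dimensional frame inequality no longer applies to a single fixed function. What is actually needed is a multidimensional perturbation estimate of Duffin--Schaeffer Lemma~II type, valid for arbitrary $\ell^\infty$-small (non-product) perturbations of the lattice; this is precisely the ``key step'' in the paper, which extends Lemma~II to the box $E\subset\bbR^{2\hn+1}$ with lattice $J=\dsintparams$ and handles the characters ($k=0$ layer, transverse perturbation) and the infinite-dimensional representations ($k\ne0$ layers, central perturbation) in one stroke. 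Your Fubini split does not avoid this lemma; it only relocates it to the $2\hn$-dimensional character block.

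A secondary issue is your bookkeeping: the absorption step uses that $A_C(M)-1$ and $1-A_T(M)/A_T(0)$ are $O(M)$, but Theorem~\ref{dsresult} as stated only guarantees \emph{some} positive frame constants, not constants tending to the Parseval value; the quantitative decay again requires the Lemma~II-type estimate (available in one dimension from Duffin--Schaeffer, but in $2\hn$ dimensions it is exactly the missing lemma above). This particular difficulty is avoidable: writing $\Sigma=\sum_{k\ne0}\|\mathcal F_3 f(\cdot,\cdot,\omega_k)\|_{L^2(D)}^2$, one can combine your slice-wise bound $\|F_0\|^2+\Sigma\ge A_T\|f\|^2$ with the character bound via $q(f)+r(f)\ge\min\{A_C,\,(1-C(M))\}\bigl(\|F_0\|^2+\Sigma\bigr)\ge\min\{A_C,\,(1-C(M))\}\,A_T\,\|f\|_{L^2(E)}^2$, and similarly for the upper bound, which needs only positivity of the constants rather than convergence to $1$. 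With that repair, your $t$-direction argument is sound; the remaining and essential gap is the multidimensional (non-product) perturbation estimate for the character frequencies, which must be proved and cannot be deduced from the one-dimensional theorem by iteration.
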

\begin{proof}
Let $f\in L^2(E)$.  For $b,\beta, \omega \in\bbR^{\hn}$, the (Euclidean) Fourier transform of $f$ at $(b,\beta,\omega)$ is defined to be
\[
\hat{f}(b,\beta,\omega)=\int\int\int f(x,\xi,t)e^{-2\pi i(b\mycdot x+\beta\mycdot\xi+\omega t)}\, dx\, d\xi\, dt .
\]
Let $\mathcal{F}_1$, $\mathcal{F}_2$, and $\mathcal{F}_3$ denote the corresponding Fourier transforms with respect to the first, second, and third variables, respectively. Further, the symbols $p$, $q$, and $r$ will denote the quadratic forms 
\[
q(f)=\sum_{a,\alpha}\left|\chi_{b_a,\beta_\alpha}(f)\right|^2
\]
and
\[
r(f)=\sum_{k\ne0}{ |2k|}^{\hn}\left\Vert \rho_{\omega_{k}}(f)\right\Vert _{\HS}^{2}
\]
and
\[
p(f)=q(f)+r(f).
\]
The result to be proven, in effect, is that for $M>0$ sufficiently small, the seminorm $p^{1/2}$ is equivalent to $\left\Vert \mydot \right\Vert_{L^2(E)}$.

The key step in this proof will be a simple extension of Duffin and Schaeffer's \cite[Lemma II]{duffin1952class} for the domain $E$. Specifically, given $J=\dsintparams$ and given $\tilde{\ }: J \rightarrow \bbR^{2\hn+1}$ and given that the number 
\[
M' = \sup_{z\in J} \left\Vert \tilde{z} - z \right\Vert_{\infty}
\]
is sufficiently small, there is $T=T(M')$ such that
\[
\sum_{z\in J}\left|\hat{f}(\tilde{z})-\hat{f}(z)\right|^2 \le T(M')\sum_{z \in J}\left|\hat{f}(z)\right|^2
\]
for every $f\in L^{2}(E)$. By the triangle inequality, this means that the quantity
\begin{equation}
\sum_{z\in J}\left|\hat{f}(\tilde{z})\right|^2
\label{eq:nonharmic-fourier-series}
\end{equation}
is bounded above and below by the quantity
\begin{equation*} 
\left(1\pm T(M')^{1/2}\right)^2\sum_{z\in J}\left|\hat{f}(z)\right|^2=\left(1\pm T(M')^{1/2}\right)^2\left\Vert f\right\Vert _{L^2(E)}^2 .
\end{equation*}
Thus, it suffices to show that $p(f)$ is bounded above and below
by positive multiples of \eqref{eq:nonharmic-fourier-series} for some
$\tilde{z}$'s for which $M'=M$. 

For $\omega\ne 0$ and $f\in L^2(E)$, it will be useful to obtain a formula for $\left\Vert \rho_{\omega}(f)\right\Vert _{\HS}^{2}$.  By an argument in Chapter 7 of \cite{gb1995course}, the operator $\rho_{\omega}(f):L^{2}(\bbR^{\hn})\rightarrow L^2(\bbR^{\hn})$ has Hilbert-Schmidt norm
\[
\left\Vert \rho_{\omega}(f)\right\Vert _{\HS}^2=\frac{1}{|\omega|^{\hn}}\int\int\left|\mathcal{F}_3 f(u,v,\omega)\right|^2\, du\, dv
\]
for Haar measure on $H_n$ normalized to coincide with Lebesgue measure on $\bbR^{2\hn + 1}$. Further, the facts that $g=\mathcal{F}_3f(\mydot,\mydot,\omega)$ is supported on $D$ and is square-integrable imply that $\left\Vert \rho_{\omega}(f)\right\Vert _{\HS}^2$ may be written using the $2n$-dimensional Fourier series expansion of $g$ as
\[
 \left\Vert \rho_{\omega}(f)\right\Vert _{\HS}^{2} = \frac{1}{|\omega|^{\hn}} \sum_{a,\alpha\in\bbZ^{\hn}}\left|\mathcal{F}_1\mathcal{F}_2\mathcal{F}_3f(a,\alpha,\omega)\right|^2=\frac{1}{|\omega|^{\hn}} \sum_{a,\alpha}\left|\hat{f}(a,\alpha,\omega)\right|^2 
\]
for any $\omega\ne 0$.

Consider $|r(f)-\myphi(f)|$, where 
\begin{align}
\myphi(f) & =\sum_{k\ne 0}\sum_{a,\alpha}\left|\hat{f}(a,\alpha,\omega_k)\right|^2
\label{eq:expanded-r} \\
& = \sum_{k\ne 0} \myphikf . \nonumber 
\end{align}
The quantity has the following upper bound:
\begin{align*}
\left|r(f)-\myphi(f)\right| 
& \le\sum_{k\ne0}\left|\left|\frac{{ 2k}}{\omega_{k}}\right|^{\hn}-1\right|\myphikf\\
 & \le\sup_{k\ne0}\left|\left|\frac{{ 2k}}{\omega_{k}}\right|^{\hn}-1\right|\sum_{k\ne0}\myphikf\\
 & =\sup_{k\ne0}\left|\left|\frac{{ 2k}}{\omega_{k}}\right|^{\hn}-1\right| \phi(f) .
\end{align*}
For $M \ll 1$, a bound may be obtained by replacing $\left|\omega_{k}\right|^{\hn}$
by ${ |2k|}^{\hn}-\hn M { |2k|}^{n-1}$. A corresponding bound
on the supremum terms is $\hn M/({ |2k|}-\hn M)$, which is decreasing
in $\left|k\right|$. Thus, the supremum term is less than $C(M)=\hn M/({ 2}-\hn M)$,
which goes to zero as $M$ goes to zero. In other words, 
\begin{equation} \label{eq:inf-dim-fourier-terms-ineq}
(1-C(M))\myphi(f)\le r(f)\le(1+C(M))\myphi(f) .
\end{equation}

The inequality
\begin{equation} 
\label{eq:simplyobtainedineq}
(1-C(M))(\phi(f)+q(f))\le p(f)\le(1+C(M))(\phi(f)+q(f))
\end{equation}
results from adding $(1-C(M))q(f)\le q(f)\le(1+C(M))q(f)$ to \eqref{eq:inf-dim-fourier-terms-ineq}. For each $b,\beta\in\bbR^{\hn}$, the quantity $\chi_{b,\beta}(f)$ is equal to $\hat{f}(b,\beta,0)$,
so
\begin{equation*}
q(f)=\sum_{a,\alpha\in\bbZ^{\hn}}\left|\hat{f}(b_a,\beta_\alpha,0)\right|^2 
\end{equation*}
for Haar measure as above. Thus, combining the above with \eqref{eq:simplyobtainedineq} and \eqref{eq:expanded-r} gives
\[
(1-C(M))\sum_{z\in J}\left|\hat{f}(\tilde{z})\right|^2
\le p(f)
\le(1+C(M))\sum_{z\in J}\left|\hat{f}(\tilde{z})\right|^2
\]
where, when $k=0$, $(a,\alpha,{ 2}k)\tilde{\ }=(b_a, \beta_\alpha, 0)$ and, when $k\ne 0$, $(a,\alpha,{ 2}k)\tilde{\ }=(a,\alpha, \omega_k)$. For these values of $\tilde{z}$, the number $M'$ is equal to $M$, and
\begin{align*}
& (1-C(M))(1-T(M)^{1/2})^2\left\Vert f\right\Vert _{L^{2}(E)}^{2} \\
& \le p(f) \\
& \le(1+C(M))(1+T(M)^{1/2})^2\left\Vert f\right\Vert _{L^{2}(E)}^{2}
\end{align*}
as desired.
\end{proof}
 
Observe that by making the perturbations small, $A$ and $B$ can be made as close to one as desired, resulting in a ``nearly Parseval'' OVF of representations. Thus, viewing the list of representations $\{\chi_{b_a,\beta_\alpha}\}\cup\{\rho_{\omega_k}\}$ with the appropriate number of repetitions, the desired result about OVFs of representations on $\bbH$ is obtained:  all that is needed to specify one is a sequence of numbers satisfying a Duffin-Schaeffer type stability condition. In particular, since an element $f$ in a Hilbert space $\fH$ is uniquely specified by $\{T_j f\}$ when $\{T_j\}$ is a OVF for $\fH$, the preceding shows that $f\in L^2(E)$ is uniquely specified by $\{\pi_j(f)\}$.

\section{Conclusion}

The preceding sections have described what it means for a OVF of representations on a locally compact Lie group $G$ to be harmonic. For $G=\bbH$, $\Gamma=\bbZ^{2n}\times \frac{1}{2}\bbZ$, and $E=D\times \mqqo$, a family of OVFs that are are ``almost harmonic'' was constructed by perturbing a harmonic OVF of representations in a particular way.  This construction is analogous to the development of frames of non-harmonic exponentials in $L^2(E)$ starting with an orthonormal basis of harmonic exponentials given by Duffin and Schaeffer. The OVFs constructed here appear to stand in contrast those found in current literature, which are typically generated as the unitary orbit of a single fixed operator \cite{Han:2014kx,kaftal2009operator}.
The nature of the OVFs introduced here is more similar to the non-harmonic Fourier frames of \cite{duffin1952class} than to wavelet or Gabor systems, and they are more closely related to the problem in representation theory described above.

The development in this paper is restricted to OVFs for $L^2(E)$, where $E$ is a proper subset of $G$. A possible extension of interest is the case where $E=G$, seeking a theory that provides an analysis of $L^2(G)$ that provides features akin to Gabor analysis for $L^2(\bbR)$.

As noted, the condition set forth by Duffin and Schaeffer to get a Fourier frame is quite general, whereas the condition given in this paper is less so. It would be interesting to quantify the deviation from harmonic OVFs that is possible while still remaining within the set of OVFs.

\nocite{*}
\bibliographystyle{spmpsci}      
\bibliography{OVFbib_abbrev}

\end{document}